\newtheorem{lemma}{Lemma}
\newtheorem{corollary}{Corollary}
\newtheorem{theorem}{Theorem}
\theoremstyle{definition}
\newtheorem*{corrections}{Corrections to [12]}
\newtheorem*{note}{Note}
\newtheorem*{problems}{Problems}
\begin{document}
\title{Note on cyclic sum of certain parametrized multiple series}
\author{Masahiro Igarashi}
\date{}
\maketitle
\begin{abstract} 
We prove the cyclic sum formulas for certain two-parameter multiple series. These are new and non-trivial generalizations of the cyclic sum formulas for multiple zeta values and multiple zeta-star values. 
\end{abstract}
\begin{flushleft}
\textbf{Keywords}: Cyclic sum; Parametrized multiple series; Multiple zeta value; Multiple zeta-star value; Multiple Hurwitz zeta value 
\end{flushleft}
\begin{flushleft}
\textbf{2020 Mathematics Subject Classification}: 11M32, 11M35
\end{flushleft}
\section{Introduction}
In the present paper, we deal with relations for our two-parameter multiple series 
\begin{equation}
\sum_{0\le m_1< \cdots<m_{n}<\infty}
\frac{(\alpha)_{m_1}}{{m_1}!}
\frac{{m_n}!}{(\alpha)_{m_n}}
\left\{\prod_{i=1}^{n}\frac{1}{(m_i+\alpha)^{a_i}(m_i+\beta)^{b_i}}\right\},
\end{equation}
\begin{equation}
\sum_{0\le m_1{\le} \cdots{\le}m_{n}<\infty}
\frac{(\alpha)_{m_1}}{{m_1}!}
\frac{{m_n}!}{(\alpha)_{m_n}}
\left\{\prod_{i=1}^{n}\frac{1}{(m_i+\alpha)^{a_i}(m_i+\beta)^{b_i}}\right\},
\end{equation}
where $n\in\mathbb{Z}_{\ge1}$; $a_i,b_i\in\mathbb{Z}$ ($i=1,\ldots,n$) such that $a_i+b_i\ge1$ ($i=1,\ldots,n-1$), $a_n+b_n\ge2$; $\alpha,\beta\in\mathbb{C}$ such that $\mathrm{Re}(\alpha)>0$, $\beta\notin\mathbb{Z}_{\le0}$. 
Here $\mathbb{Z}_{\ge{k}}:=\{k,k+1,k+2,\ldots\}$ and 
$\mathbb{Z}_{\le{k}}:=\{k,k-1,k-2,\ldots\}$ for $k\in\mathbb{Z}$. The symbol $(a)_m$ denotes the Pochhammer symbol, i.e., $(a)_m = a(a+1)\cdots(a+m-1)$ ($m\in\mathbb{Z}_{\ge1}$) and $(a)_0 =1$. 
The multiple series (1) and (2) are a non-trivial two-parameter extension of the multiple zeta value $\zeta(\{k_i\}^{n}_{i=1})$ (MZV for short) and of the multiple zeta-star value $\zeta^{\star}(\{k_i\}^{n}_{i=1})$ (MZSV for short), 
respectively, where 
\begin{equation*}
\begin{aligned}
&\zeta(\{k_i\}^{n}_{i=1}) := \sum_{0< m_1< \cdots<m_n<\infty}\frac{1}{m_1^{k_1} \cdots m_n^{k_n}},\\ 
&\zeta^{\star}(\{k_i\}^{n}_{i=1}) := \sum_{1\le m_1\le \cdots{\le}m_n<\infty}\frac{1}{m_1^{k_1} \cdots m_n^{k_n}}
\end{aligned}
\end{equation*}
($n\in\mathbb{Z}_{\ge1}$; $k_i\in\mathbb{Z}_{\ge1}$ ($i=1,\ldots,n-1$), 
$k_n\in\mathbb{Z}_{\ge2}$; $\{k_i\}^{n}_{i=1}:=k_1,\ldots,k_n$); see Euler \cite{eu}, Hoffman \cite{h}, Zagier \cite{z}, 
which are the pioneer works on MZV and MZSV. The study of this kind of extension was originated by the author in \cite{i2007}. 
One of the interesting properties of MZV and MZSV is that they satisfy 
various interesting relations. 
Our extensions keep this property. For example, they have the sum and the duality formula, their generalizations 
(see \cite{i2007} and \cite{i2009}) and non-trivial evaluations derived 
from a hypergeometric identity (see \cite{i4}). 
An advantage of (1) and (2) is that they have a good derivation property on the parameters $\alpha$ and $\beta$, that is, their partial derivatives can be 
expressed in $\mathbb{Z}$-linear combinations of themselves; see \textbf{(C1)} of Section 3 and the proof of \cite[Lemma 2.5]{i2009}. 
This allows us to derive many relations from one relation. In Section 3, we shall give such an application of my results. In the present paper, we prove 
a new class of relations among (1) and (2), the cyclic sum formula (CSF for short). This shows that our multiple series are the right extensions of MZV 
and MZSV. Here we put
\begin{equation*}
\begin{aligned}
&Z_{I}(\{k_i\}^{n}_{i=1};(\alpha,\beta))\\
:=
&\sum_{0\le m_1< \cdots<m_{n}<\infty}
\frac{(\alpha)_{m_1}}{{m_1}!}
\frac{{m_n}!}{(\alpha)_{m_n}}
\frac{1}{(m_1+\beta)^{k_1}}\left\{\prod_{i=2}^{n}\frac{1}{(m_i+\alpha)(m_i+\beta)^{k_i-1}}\right\},\\
&Z_{II}(\{k_i\}^{n}_{i=1};(\alpha,\beta))\\
:=
&\sum_{0\le m_1< \cdots<m_{n}<\infty}
\frac{(\alpha)_{m_1}}{{m_1}!}
\frac{{m_n}!}{(\alpha)_{m_n}}
\left\{\prod_{i=1}^{n-1}\frac{1}{(m_i+\alpha)(m_i+\beta)^{k_i-1}}\right\}\\
&\times\frac{1}{(m_n+\alpha)^2(m_n+\beta)^{k_n-2}},\\
&Z^{\star}_{I}(\{k_i\}^{n}_{i=1};(\alpha, \beta))\\
:=
&\sum_{0\le m_1\le \cdots {\le}m_n<\infty}\frac{(\alpha)_{m_1}}{{m_1}!}\frac{{m_n}!}{(\alpha)_{m_{n}}}
\frac{1}{(m_1+\beta)^{k_1}}
\left\{ \prod_{i=2}^{n}\frac{1}{(m_i+\alpha)(m_i+\beta)^{k_i-1}} \right\},\\
&Z(a|b;(\alpha^{'},\beta)):=\sum_{m=0}^{\infty}
\frac{1}{(m+\alpha^{'})^{a}(m+\beta)^{b}},
\end{aligned}
\end{equation*}
where $k_i\in\mathbb{Z}_{\ge1}$ ($i=1,\ldots,n-1$), $k_n\in\mathbb{Z}_{\ge2}$; 
$a,b\in\mathbb{Z}$ such that $a+b\ge2$; $\alpha,\alpha^{'},\beta\in\mathbb{C}$ such that $\mathrm{Re}(\alpha)>0$ and $\alpha^{'},
\beta\notin\mathbb{Z}_{\le0}$. The last single series is the case $n=1$ of both (1) and (2). Our main results are as follows: 
\begin{theorem}
Let $k_1,\ldots,k_n\in\mathbb{Z}_{\ge1}$ such that $k_i\ge2$ for some $i$ $(i=1,\ldots,n)$. 
Then 
\par
$(i) (\textrm{April 2013})$
\begin{equation}
\begin{aligned}
&\sum_{i=1}^{n}\sum_{j=0}^{k_i-2}Z_{I}(j+1,k_{i+1},\ldots,k_n,k_1,\ldots,k_{i-1},k_i-j;(\alpha,\beta))\\
=
&\sum_{i=1}^{n}Z_{II}(k_{i+1},\ldots,k_n,k_1,\ldots,k_{i-1},k_i+1;(\alpha,\beta)),
\end{aligned}
\end{equation}
\par
$(ii) (\textrm{October 2012})$
\begin{equation}
\begin{aligned}
&\sum_{i=1}^{n}\sum_{j=0}^{k_i-2}Z^{\star}_{I}(j+1,k_{i+1},\ldots,k_n,k_1,\ldots,k_{i-1},k_i-j;(\alpha,\beta))\\
=
&(k-n)Z(n|k-n+1;(\alpha,\beta))+nZ(n+1|k-n;(\alpha,\beta))
\end{aligned}
\end{equation}
for all $\alpha,\beta\in\mathbb{C}$ such that $\mathrm{Re}(\alpha)>0$, $\beta\notin\mathbb{Z}_{\le0}$, where $k:=k_1+\cdots+k_n$. 
Here the empty sum $\sum_{j=0}^{-1}$ is regarded as $0$. 
\end{theorem}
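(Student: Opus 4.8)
The plan is to prove both identities by the same telescoping/partial-fraction mechanism that underlies the classical cyclic sum formula for MZVs (Hoffman--Ohno) and for MZSVs, but carried out at the level of the two-parameter series. The key structural fact I would exploit is the partial-fraction identity

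\[
\frac{1}{(m+\alpha)^{a}(m+\beta)^{b}}
=\sum_{p}\binom{a+b-1-p}{b-1}\frac{(-1)^{?}}{(\beta-\alpha)^{\bullet}}\frac{1}{(m+\alpha)^{p}}+\cdots,
\]

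i.e. the observation recorded in \textbf{(C1)} that derivatives in $\alpha,\beta$ stay within the same family. Rather than expand in $(\beta-\alpha)$, though, I expect the cleaner route is a telescoping in the outermost summation variable. Let me set up the dictionary first: writing each summand of (1)/(2) with its weight $\frac{(\alpha)_{m_1}}{m_1!}\frac{m_n!}{(\alpha)_{m_n}}$, the factor $\frac{1}{(m_i+\alpha)}$ is exactly what lets the Pochhammer ratios telescope when the summation range is extended or an index is incremented.

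First I would prove (ii), the star case, since its right-hand side is only a single-sum object $Z(a|b;(\alpha,\beta))$ and thus serves as a base. The plan is to form the cyclic sum on the left and collapse the inner $j$-sum for each fixed $i$: the inner sum $\sum_{j=0}^{k_i-2}$ over $Z^{\star}_I(j+1,\ldots,k_i-j;\cdots)$ should telescope in $j$ because incrementing $j$ moves one power from the last factor $(m+\beta)^{k_i-j}$ to the first factor $(m+\beta)^{j+1}$ of the cyclically rotated word, and in the star (``$\le$'') ordering the boundary terms $m_1=m_n$ collapse the multiple series to a shorter one. After telescoping, each cyclic piece reduces to a depth-one series, and summing over $i=1,\ldots,n$ the depth-one contributions should recombine into precisely $(k-n)Z(n|k-n+1)+nZ(n+1|k-n)$. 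The combinatorial bookkeeping that produces the coefficients $k-n$ and $n$ is where I would be most careful: the $n$ should count the $n$ cyclic rotations each contributing one ``extra $\alpha$-power'' term, while $k-n=\sum_i(k_i-1)$ should count the total number of $(m+\beta)$-powers redistributed.

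For part (i), the non-star case, I would run the same telescoping but now the strict inequalities $m_1<\cdots<m_n$ prevent the clean collapse to depth one; instead the boundary terms produced by telescoping are exactly the depth-$n$ series $Z_{II}$ on the right-hand side, whose distinguishing feature is the factor $\frac{1}{(m_n+\alpha)^2(m_n+\beta)^{k_n-2}}$ with a \emph{squared} $\alpha$-factor at the top. I expect this square to arise precisely from the telescoping step: moving a power onto $(m_n+\alpha)$ while the weight already contributes one factor $\frac{1}{(m_n+\alpha)}$ (implicit in the $\frac{m_n!}{(\alpha)_{m_n}}$ normalization interacting with the product) produces the second power. Concretely, I would write the defining finite difference in the top variable $m_n$, use $\frac{m_n!}{(\alpha)_{m_n}}-\frac{(m_n-1)!}{(\alpha)_{m_n-1}}=\frac{(m_n-1)!}{(\alpha)_{m_n-1}}\cdot\frac{1-\alpha}{m_n+\alpha-1}$-type relations to generate the telescoping, and track how the residual terms assemble into $Z_{II}$.

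\emph{The hard part} will be verifying that the cyclic symmetrization makes all the ``unwanted'' cross terms cancel. In the single-word telescoping one gets boundary contributions at both ends ($m_i=m_{i-1}$ and $m_i=m_{i+1}$ in the rotated word); individually these do not match the right-hand side, and it is only after summing over all $n$ cyclic rotations that the head-term of rotation $i$ cancels the tail-term of rotation $i+1$. I would therefore organize the proof so that the telescoping identity is stated for a single rotation with \emph{explicit} boundary terms, and then a separate lemma asserts that $\sum_{i=1}^n(\text{head}_i-\text{tail}_{i-1})=0$ by reindexing $i\mapsto i+1$ cyclically. Establishing that this cancellation is exact — with the correct signs and with no leftover depth-changing remainder — is the genuine obstacle; everything else is the partial-fraction and Pochhammer-ratio algebra already sanctioned by \textbf{(C1)} and \cite[Lemma 2.5]{i2009}.
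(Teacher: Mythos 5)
Your high-level strategy --- a single-rotation identity with explicit boundary terms, followed by cancellation of head against tail under the cyclic sum --- is exactly the Hoffman--Ohno/Ohno--Wakabayashi scheme that the paper follows. But there is a genuine gap in the execution: you never introduce the auxiliary series that makes the scheme work, namely
\[
T^{\star}(\{k_i\}_{i=1}^{n};(\alpha,\beta))
=\sum_{\substack{0\le m_0\le m_1\le\cdots\le m_n<\infty\\ m_0\ne m_n}}
\frac{(\alpha)_{m_0}}{m_0!}\,\frac{m_n!}{(\alpha)_{m_n}}
\Bigl\{\prod_{i=1}^{n}\frac{1}{(m_i+\alpha)(m_i+\beta)^{k_i-1}}\Bigr\}\frac{1}{m_n-m_0}
\]
and its strict analogue $T$. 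The extra variable $m_0$ and the factor $1/(m_n-m_0)$ are not optional bookkeeping: the partial fraction that ``moves a power of $(m+\beta)$ from the top slot to the bottom slot'' is
$\frac{1}{(m_0+\beta)(m_n+\beta)}=\frac{1}{m_n-m_0}\bigl(\frac{1}{m_0+\beta}-\frac{1}{m_n+\beta}\bigr)$,
so every step of the telescoping you describe produces precisely this factor, and without the auxiliary family to absorb it you leave the class $Z^{\star}_{I}$ at the first step. Relatedly, the inner sum $\sum_{j=0}^{k_i-2}Z^{\star}_{I}(j+1,\ldots,k_i-j;(\alpha,\beta))$ does not itself telescope: it is the accumulation of residue terms produced by iterating the partial fraction inside $T^{\star}$, and it sits on the left-hand side of the single-rotation identity (Lemma 1 of the paper), not inside a collapsing sum.

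The second missing piece is the device that actually performs the rotation, i.e.\ that identifies the leftover boundary term with the rotated word $T^{\star}(k_n,k_1,\ldots,k_{n-1})$ so that the cyclic sum of differences vanishes. Here that is the resummation identity
\[
\frac{(\alpha)_{m+1}}{m!}\sum_{l=n}^{\infty}\frac{l!}{(\alpha)_{l+1}}\frac{1}{l-m}
=\frac{n!}{(\alpha)_{n}}\sum_{l=0}^{m}\frac{(\alpha)_{l}}{l!}\frac{1}{n-l}
\qquad(0\le m<n,\ \mathrm{Re}(\alpha)>0),
\]
which converts the sum over the top variable $m_n$ into a sum over a new bottom variable $l\le m_0$; it is in this step (via the diagonal contribution $m_0=\cdots=m_{n-1}$) that the term $Z(n+1|k-n;(\alpha,\beta))$ in (ii) and, in the strict case, the squared factor $(m_n+\alpha)^{-2}$ of $Z_{II}$ in (i) actually originate --- not from the interaction of $m_n!/(\alpha)_{m_n}$ with the product as you guessed. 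Without these two ingredients your ``hard part'' (the head/tail cancellation lemma) cannot even be formulated, because the boundary terms of a naive telescoping are not rotated words; with them the cancellation is immediate, since applying the single-rotation identity to each cyclic shift of $(k_1,\ldots,k_n)$ and summing over the $n$ shifts kills the $T^{\star}$ (resp.\ $T$) differences identically and leaves exactly (4) (resp.\ (3)).
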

These relations are new and non-trivial generalizations of CSFs for MZVs (Hoffman and Ohno \cite[p.~333]{ho}) and MZSVs (Ohno and Wakabayashi \cite[Theorem 1]{ow}): the former is the case $\alpha=\beta=1$ of (3) and 
the latter is that of (4). 
Another simple specialization $\alpha=\beta$ of (3) and of (4) give 
CSFs for one-parameter multiple series proved in 
\cite[Theorem 1.1 (i) and (ii)]{i}; therefore Theorem 1 is a two-parameter extension of the theorem. 
The relation (4) yields also a new sum formula for (2); see (9) below. 
It can be seen that the multiple series (1) and (2) have a flavor of 
multiple Hurwitz zeta values (MHZVs for short). Our results give also relations for MHZVs. 
In fact, the case $\alpha=1$ of (3), (4) and of (9) give the cyclic sum and the sum formula for the MHZVs 
\begin{equation*}
\sum_{0\le m_1{\lesssim}\cdots{\lesssim}m_{n}<\infty}
\prod_{i=1}^{n}\frac{1}{(m_i+1)^{a_i}(m_i+\beta)^{b_i}},
\end{equation*}
where the symbol $\lesssim$ denotes $<$ or $\le$. For other relations for 
MHZVs, see \cite{i2007}, \cite{i4}, \cite[Note 2]{i2020}. For CSFs and related topics ($q$-analogues etc.), 
see also \cite{ikoo}, \cite{o1}, \cite{o2}, \cite{tw}, \cite{b}, \cite{oo}, \cite{ooz}, \cite{ktw}. We shall prove Theorem 1 in Section 2. In Section 3, 
we shall give applications of Theorem 1 and also pose some problems. The contents of the present paper are results in the author's research project on parametrized multiple series. See also \cite[Note 2 (iv) and (v)]{i2020}. 
The identities (4), (9) and the identity (3) have been known to be 
results of the author since October 2012 and April 2013, respectively; 
see the following Note. 
\begin{note}[2022] 
The present paper is a revised version of manuscripts of mine which were 
submitted to a journal on March 15, 2013 and February 17, 2016. 
(My manuscript submitted on March 15, 2013 was distributed as a preprint on December 5, 2013.) I proved (4) and (9) in October 2012, and wrote them in 
an unpublished manuscript in October 2012. 
I wrote about (4) also in a postdoctoral research plan document submitted to a previous affiliation of mine (Graduate School of Mathematics, Nagoya University, Japan) in February 2013. 
As regards (3), I proved it in April 2013, and wrote it in an unpublished manuscript on April 9, 2013. 
As with (4), I wrote about (3) also in a postdoctoral research plan document 
submitted to my previous affiliation in February 2014. 
All of the identities (3), (4) and (9) were written in a preprint of 
mine distributed on February 12, 2015 and in a manuscript of mine submitted 
to a journal on February 6, 2018 (rejected on June 11, 2018). 
As regards Section 3, the contents of \textbf{(C2)} 
and of the former part of \textbf{(C1)} (i.e., the part related to (4)) were written in my manuscript submitted on March 15, 2013 and in its preprint distributed on December 5, 2013. 
The contents of the latter part of \textbf{(C1)} (i.e., the part related to (3)) were written in 
my preprint distributed on February 12, 2015 and in my manuscript submitted 
on February 6, 2018. The contents of \textbf{(C3)} were written in my manuscript submitted on February 17, 2016 (rejected on March 10, 2016). 
A problem on the multiple series (18) posed in \textbf{(C3)} was written 
also in a postdoctoral research plan document of mine submitted to my previous affiliation in February 2016. 
I submitted the present version to many journals in 
February 2018--February 2022. In this Note and other places of the present paper, I referred to research documents of mine of October 2012--February 2018: unpublished manuscripts, submissions 
to journals, preprints and postdoctoral research plan documents. I note that all of those research documents are electronic files made in a computer system of 
my previous affiliation in October 2012--February 2018. 
(This can be confirmed by asking people who read the contents of 
my electronic files in 2012--2018.) In fact, in 2012--March 2018, 
I used the computer system whenever I wrote research documents. 
\end{note}
\section{Proof of Theorem 1}
Hoffman and Ohno \cite{ho}, Ohno and Wakabayashi \cite{ow} proved 
CSFs for MZVs and MZSVs by using certain auxiliary multiple series. 
To prove Theorem 1, we use their method. 
We introduce the following auxiliary multiple series:
\begin{equation*}
\begin{aligned}
&T(\{k_i\}^{n}_{i=1};(\alpha,\beta))\\
:=&\sum_{\begin{subarray}{c}
          0\le m_0<m_1< \cdots<m_n<\infty
         \end{subarray}}
\frac{(\alpha)_{m_0}}{{m_0}!}\frac{{m_n}!}{(\alpha)_{m_{n}}}
\left\{ \prod_{i=1}^{n}\frac{1}{(m_i+\alpha)(m_i+\beta)^{k_i-1}} \right\}
\frac{1}{m_n-m_0},
\end{aligned}
\end{equation*}
\begin{equation*}
\begin{aligned}
&T^{\star}(\{k_i\}^{n}_{i=1};(\alpha,\beta))\\
:=&\sum_{\begin{subarray}{c}
          0\le m_0{\le}m_1{\le} \cdots{\le}m_n<\infty\\
          m_0{\neq}m_n
         \end{subarray}}
\frac{(\alpha)_{m_0}}{{m_0}!}\frac{{m_n}!}{(\alpha)_{m_{n}}}
\left\{ \prod_{i=1}^{n}\frac{1}{(m_i+\alpha)(m_i+\beta)^{k_i-1}} \right\}
\frac{1}{m_n-m_0}.
\end{aligned}
\end{equation*}
These multiple series converge absolutely for 
$k_1,\ldots,k_n\in\mathbb{Z}_{\ge1}$ such that $k_i\ge2$ for some $i$ $($$i=1,\ldots,n$$)$ 
and $\alpha,\beta\in\mathbb{C}$ such that $\mathrm{Re}(\alpha)>0$, 
$\beta\notin\mathbb{Z}_{\le0}$. For the proof, refer to \cite[Proof of Lemma 2.1]{i}. The key to proving CSF in \cite{ho} and \cite{ow} was 
a cyclic property of auxiliary multiple series (see \cite[Theorem 3.2]{ho} and 
\cite[Key Lemma 1]{ow}). The above multiple series also have the property, 
that is, they satisfy the following identities (see also \cite[Lemma 2.5]{i}): 
\begin{lemma}
Let $k_1,\ldots,k_n\in\mathbb{Z}_{\ge1}$ such that $k_i\ge2$ for some $i$ $(i=1,\ldots,n)$. 
Then 
\par
$(i)$
\begin{equation}
\begin{aligned}
&T(k_1,\ldots,k_n;(\alpha,\beta))-T(k_n,k_1,\ldots,k_{n-1};(\alpha,\beta))\\
=&Z_{II}(k_n,k_1,\ldots,k_{n-2},k_{n-1}+1;(\alpha,\beta))\\
&-\sum_{j=0}^{k_n-2}Z_{I}(j+1,k_1,\ldots,k_{n-1},k_n-j;(\alpha,\beta)),
\end{aligned}
\end{equation}
\par
$(ii)$
\begin{equation}
\begin{aligned}
&T^{\star}(k_1,\ldots,k_n;(\alpha,\beta))-T^{\star}(k_n,k_1,\ldots,k_{n-1};(\alpha,\beta))\\
=&(k_n-1)Z(n|k-n+1;(\alpha,\beta))+Z(n+1|k-n;(\alpha,\beta))\\
&-\sum_{j=0}^{k_n-2}Z^{\star}_{I}(j+1,k_1,\ldots,k_{n-1},k_n-j;(\alpha,\beta))
\end{aligned}
\end{equation}
for all $\alpha,\beta\in\mathbb{C}$ such that $\mathrm{Re}(\alpha)>0$, $\beta\notin\mathbb{Z}_{\le0}$, where $k:=k_1+\cdots+k_n$. 
Here the empty sum $\sum_{j=0}^{-1}$ is regarded as $0$. 
\end{lemma}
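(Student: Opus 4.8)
The plan is to follow the method of Hoffman--Ohno \cite{ho} and Ohno--Wakabayashi \cite{ow}: exploit the kernel $1/(m_n-m_0)$ together with one elementary partial-fraction identity to rewrite $T(k_1,\dots,k_n;(\alpha,\beta))$, and then match the outcome against the cyclically shifted series $T(k_n,k_1,\dots,k_{n-1};(\alpha,\beta))$. Throughout I would argue at the level of summands, which is legitimate because all the series converge absolutely under the stated hypotheses (as recalled from \cite[Proof of Lemma 2.1]{i}), so every rearrangement and partial summation is valid.

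First I would isolate the $\beta$-power carried by the top variable. Setting $x=m_0+\beta$, $y=m_n+\beta$ (so $y-x=m_n-m_0$) and iterating the elementary relation $\frac{1}{y^{b}(y-x)}=\frac{1}{x^{b}(y-x)}-\sum_{i=1}^{b}\frac{1}{x^{i}y^{b-i+1}}$ with $b=k_n-1$ gives
\[
\frac{1}{(m_n+\beta)^{k_n-1}(m_n-m_0)}
=\frac{1}{(m_0+\beta)^{k_n-1}(m_n-m_0)}
-\sum_{j=0}^{k_n-2}\frac{1}{(m_0+\beta)^{j+1}(m_n+\beta)^{k_n-j-1}}.
\]
Multiplying by the remaining factors of the summand of $T(k_1,\dots,k_n;(\alpha,\beta))$ and summing, the terms under $\sum_j$ reassemble exactly into $Z_{I}(j+1,k_1,\dots,k_{n-1},k_n-j;(\alpha,\beta))$; crucially, $m_0$ carries no $(m_0+\alpha)$-factor, which is precisely the special first slot of $Z_I$. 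Thus
\[
T(k_1,\dots,k_n;(\alpha,\beta))
=S-\sum_{j=0}^{k_n-2}Z_{I}(j+1,k_1,\dots,k_{n-1},k_n-j;(\alpha,\beta)),
\]
where $S$ is the intermediate $(n+1)$-fold series in which the block $(m_0+\beta)^{-(k_n-1)}$ now sits on $m_0$, the variables $m_1,\dots,m_{n-1}$ carry the standard weights $k_1,\dots,k_{n-1}$, and $m_n$ retains only $\frac{m_n!}{(\alpha)_{m_n}}(m_n+\alpha)^{-1}$ together with the kernel. Since the $-\sum_j Z_I$ piece already coincides with the right-hand side of (5), part (i) reduces to the single identity $S-T(k_n,k_1,\dots,k_{n-1};(\alpha,\beta))=Z_{II}(k_n,k_1,\dots,k_{n-2},k_{n-1}+1;(\alpha,\beta))$.

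This last identity is the hard part. Both $S$ and the shifted series share the kernel $1/(m_n-m_0)$, the factor $\prod_{i=1}^{n}(m_i+\alpha)^{-1}$, and the Pochhammer prefactor, and they carry the \emph{same} multiset of nonzero $\beta$-exponents $\{k_n-1,k_1-1,\dots,k_{n-1}-1\}$; the sole difference is that in $S$ this pattern is placed on $(m_0,m_1,\dots,m_{n-1})$, while in $T(k_n,k_1,\dots,k_{n-1};(\alpha,\beta))$ it is placed one index higher, on $(m_1,\dots,m_n)$. I would therefore transport the pattern up by one index via a telescoping/Abel-summation argument whose engine is the Pochhammer shift relations $\frac{m!}{(\alpha)_m}\,(m+\alpha)^{-1}=\frac{(m+1)!}{(\alpha)_{m+1}}\,(m+1)^{-1}$ (for the top variable of $S$) and its companion for $(\alpha)_m/m!$ (for the bottom variable of the shifted series), interacting with the kernel $1/(m_n-m_0)$. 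The point to get right is that the associated termwise identity at fixed top variable does \emph{not} hold, so the cancellation must be organised two-dimensionally in $(m_0,m_n)$; the residual, undercancelled contribution is exactly where one summation dimension collapses, producing the $n$-fold series $Z_{II}$, and the doubled factor $(m_n+\alpha)^{2}$ of $Z_{II}$ is produced when the detached $(m_n+\alpha)^{-1}$ of $S$ meets the $(m_n+\alpha)^{-1}$ created as the weight $k_{n-1}$ is promoted to $k_{n-1}+1$ on the top slot. Combining the two displays then yields (5); this bookkeeping of the two-dimensional telescoping is the main obstacle.

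For part (ii) the scheme is identical, carried out on $T^{\star}$ with $\le$ in place of $<$ and the diagonal $m_0=m_n$ excluded. The same partial-fraction identity produces the $-\sum_j Z^{\star}_{I}$ terms on the right-hand side of (6). The new feature is that, since consecutive variables may now coincide, the two-dimensional telescoping leaves extra fully diagonal contributions in which $m_0,\dots,m_n$ are forced together; collapsing $\prod_{i=1}^{n}(m+\alpha)^{-1}(m+\beta)^{-(k_i-1)}=(m+\alpha)^{-n}(m+\beta)^{-(k-n)}$ and tracking the two admissible shifts of a single $\alpha$- or $\beta$-exponent yields precisely $(k_n-1)Z(n|k-n+1;(\alpha,\beta))+Z(n+1|k-n;(\alpha,\beta))$. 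Here the obstacle is the clean separation of the strictly increasing part (treated as in (i)) from these diagonal terms, together with the correct determination of their multiplicities $k_n-1$ and $1$.
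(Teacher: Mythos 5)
Your overall architecture coincides with the paper's: use the partial-fraction identity in the kernel variable to peel off the $\sum_j Z_I$ (resp.\ $\sum_j Z^{\star}_{I}$) block, then show that the remaining $(n+1)$-fold series equals the cyclically shifted $T$ (resp.\ $T^{\star}$) plus the correction terms. Your first step is correct, and your reduction of part (i) to the single identity $S-T(k_n,k_1,\ldots,k_{n-1};(\alpha,\beta))=Z_{II}(k_n,k_1,\ldots,k_{n-2},k_{n-1}+1;(\alpha,\beta))$ is exactly the right target. The difficulty is that you do not prove this identity: you describe a ``two-dimensional telescoping in $(m_0,m_n)$'' and then explicitly concede that its bookkeeping is ``the main obstacle''. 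That identity \emph{is} the lemma --- everything preceding it is routine --- so as written the proposal has a genuine gap at its core. The paper closes this gap not by telescoping between the two $(n+1)$-fold series but by a one-variable closed-form evaluation, quoted from \cite[Remark 2.4]{i} (its strict analogue is \cite[Lemma 2.2]{i}):
\begin{equation*}
\frac{(\alpha)_{m+1}}{m!}\sum_{l=N}^{\infty}\frac{l!}{(\alpha)_{l+1}}\frac{1}{l-m}
= \frac{N!}{(\alpha)_{N}}\sum_{l=0}^{m}\frac{(\alpha)_{l}}{l!}\frac{1}{N-l}
\qquad (0\le m<N,\ \mathrm{Re}(\alpha)>0).
\end{equation*}
After absorbing the detached factors $(m_0+\alpha)^{-1}$ and $(m_n+\alpha)^{-1}$ into the Pochhammer prefactors via $(\alpha)_{m+1}=(\alpha)_m(m+\alpha)$, this identity evaluates the infinite tail sum over the top variable $m_n$ as a \emph{finite} sum over a new bottom variable $l\le m_0$; renaming the variables then exhibits the shifted $T^{\star}$ directly, and the residual terms ($Z(n+1|k-n;(\alpha,\beta))$ in the non-strict case, the $Z_{II}$ term in the strict case) come from the degenerate stratum $m_0=\cdots=m_{n-1}$, respectively from the boundary of the new summation range, which must be split off and summed separately. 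If you want to complete your route, this summation identity is the ingredient you must state and prove; without it (or a fully worked-out equivalent of your two-dimensional cancellation) the proof does not go through.

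A smaller inaccuracy in part (ii): you attribute both diagonal corrections to the telescoping step. In fact $(k_n-1)Z(n|k-n+1;(\alpha,\beta))$ already arises in the partial-fraction step, because the summation domain of $T^{\star}$ excludes $m_0=m_n$ while $Z^{\star}_{I}$ does not; each of the $k_n-1$ subtracted terms therefore needs its full diagonal $m_0=\cdots=m_n$ added back, contributing one copy of $Z(n|k-n+1;(\alpha,\beta))$ per $j$. Only $Z(n+1|k-n;(\alpha,\beta))$ comes from the cyclic-shift step. Your multiplicities happen to be correct, but the accounting as described would not survive being written out in detail.
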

\begin{proof}
We prove (6) in the same way as in \cite[Proof of Key Lemma 1]{ow} with 
\cite[Remark 2.4]{i}. By using a partial fraction decomposition, we have 
\begin{equation*}
\begin{aligned}
&\sum_{\begin{subarray}{c}
          0\le m_0{\le}m_1{\le} \cdots{\le}m_n<\infty\\
          m_0{\neq}m_n
         \end{subarray}}
\frac{(\alpha)_{m_0}}{{m_0}!}\frac{{m_n}!}{(\alpha)_{m_{n}}}
\frac{1}{(m_0+\beta)^j}
\left\{ \prod_{i=1}^{n-1}\frac{1}{(m_i+\alpha)(m_i+\beta)^{k_i-1}} \right\}\\
&\hspace*{1.5cm}\times\frac{1}{(m_n+\alpha)(m_n+\beta)^{k_n-1-j}(m_n-m_0)}\\
=
&\sum_{\begin{subarray}{c}
          0\le m_0{\le}m_1{\le} \cdots{\le}m_n<\infty\\
          m_0{\neq}m_n
         \end{subarray}}
\frac{(\alpha)_{m_0}}{{m_0}!}\frac{{m_n}!}{(\alpha)_{m_{n}}}
\frac{1}{(m_0+\beta)^{j+1}}
\left\{ \prod_{i=1}^{n-1}\frac{1}{(m_i+\alpha)(m_i+\beta)^{k_i-1}} \right\}\\
&\hspace*{1.5cm}\times\frac{1}{(m_n+\alpha)(m_n+\beta)^{k_n-2-j}}
\left(\frac{1}{m_n-m_0}-\frac{1}{m_n+\beta}\right)\\
=
&\sum_{\begin{subarray}{c}
          0\le m_0{\le}m_1{\le} \cdots{\le}m_n<\infty\\
          m_0{\neq}m_n
         \end{subarray}}
\frac{(\alpha)_{m_0}}{{m_0}!}\frac{{m_n}!}{(\alpha)_{m_{n}}}
\frac{1}{(m_0+\beta)^{j+1}}
\left\{ \prod_{i=1}^{n-1}\frac{1}{(m_i+\alpha)(m_i+\beta)^{k_i-1}} \right\}\\
&\hspace*{1.5cm}\times\frac{1}{(m_n+\alpha)(m_n+\beta)^{k_n-2-j}(m_n-m_0)}\\
&-Z^{\star}_{I}(j+1,k_1,\ldots,k_{n-1},k_n-j;(\alpha,\beta))\\
&+Z(n|k_1+\cdots+k_n-n+1;(\alpha,\beta))
\end{aligned}
\end{equation*}
for $j=0, \ldots, k_n-2$. Further, taking the sum $\sum_{j=0}^{k_n-2}$ 
on both sides, we have 
\begin{equation}
\begin{aligned}
&T^{\star}(k_1,\ldots,k_n;(\alpha,\beta))\\
=
&\sum_{\begin{subarray}{c}
          0\le m_0{\le}m_1{\le} \cdots{\le}m_n<\infty\\
          m_0{\neq}m_n
         \end{subarray}}
\frac{(\alpha)_{m_0}}{{m_0}!}\frac{{m_n}!}{(\alpha)_{m_{n}}}
\frac{1}{(m_0+\beta)^{k_n-1}}
\left\{ \prod_{i=1}^{n-1}\frac{1}{(m_i+\alpha)(m_i+\beta)^{k_i-1}} \right\}\\
&\hspace*{1.5cm}\times\frac{1}{(m_n+\alpha)(m_n-m_0)}\\
&-\sum_{j=0}^{k_n-2}Z^{\star}_{I}(j+1,k_1,\ldots,k_{n-1},k_n-j;(\alpha,\beta)) \\
&+(k_n-1)Z(n|k_1+\cdots+k_n-n+1;(\alpha,\beta)).
\end{aligned}
\end{equation}
Here we rewrite the first term on the right-hand side of (7) as 
\begin{equation}
\begin{aligned}
&\sum_{\begin{subarray}{c}
          0\le m_0{\le}m_1{\le} \cdots{\le}m_n<\infty\\
          m_0{\neq}m_n
         \end{subarray}}
\frac{(\alpha)_{m_0}}{{m_0}!}\frac{{m_n}!}{(\alpha)_{m_{n}}}
\frac{1}{(m_0+\beta)^{k_n-1}}
\left\{ \prod_{i=1}^{n-1}\frac{1}{(m_i+\alpha)(m_i+\beta)^{k_i-1}} \right\}\\
&\hspace*{1.5cm}\times\frac{1}{(m_n+\alpha)(m_n-m_0)}\\
=
&\sum_{\begin{subarray}{c}
          0\le m_0{\le}m_1{\le} \cdots{\le}m_{n-1}<\infty\\
          m_0{\neq}m_{n-1}
         \end{subarray}}
\frac{1}{(m_0+\alpha)(m_0+\beta)^{k_n-1}}
\left\{ \prod_{i=1}^{n-1}\frac{1}{(m_i+\alpha)(m_i+\beta)^{k_i-1}} \right\}\\
&\hspace*{1.5cm}
\times\frac{(\alpha)_{m_0+1}}{{m_0}!}\sum_{m_n=m_{n-1}}^{\infty}\frac{{m_n}!}{(\alpha)_{m_{n}+1}}\frac{1}{m_n-m_0}\\
&+
\sum_{\begin{subarray}{c}
          0\le m_0=\cdots=m_{n-1}<\infty
          \end{subarray}}
\frac{1}{(m_0+\alpha)(m_0+\beta)^{k_n-1}}
\left\{ \prod_{i=1}^{n-1}\frac{1}{(m_i+\alpha)(m_i+\beta)^{k_i-1}} \right\}\\
&\hspace*{1.5cm}
\times\frac{(\alpha)_{m_0+1}}{{m_0}!}\sum_{m_n=m_{n-1}+1}^{\infty}\frac{{m_n}!}{(\alpha)_{m_{n}+1}}\frac{1}{m_n-m_0}.
\end{aligned}
\end{equation}
Using 
\begin{equation*}
\frac{(\alpha)_{m+1}}{m!}\sum_{l=n}^{\infty}\frac{l!}{(\alpha)_{l+1}}\frac{1}{l-m}
= \frac{n!}{(\alpha)_{n}}\sum_{l=0}^{m}\frac{(\alpha)_{l}}{l!}\frac{1}{n-l}
\end{equation*}
($m,n\in\mathbb{Z}$ such that $0{\le}m<n$ and $\alpha\in\mathbb{C}$ with $\mathrm{Re}(\alpha)>0$; \cite[Remark 2.4]{i}) to the inner sums, we have 
\begin{equation*}
\begin{aligned}
(8)
=
&\sum_{\begin{subarray}{c}
          0\le m_0{\le}m_1{\le} \cdots{\le}m_{n-1}<\infty\\
          m_0{\neq}m_{n-1}
         \end{subarray}}
\frac{1}{(m_0+\alpha)(m_0+\beta)^{k_n-1}}
\left\{ \prod_{i=1}^{n-1}\frac{1}{(m_i+\alpha)(m_i+\beta)^{k_i-1}} \right\}\\
&\hspace*{1.5cm}
\times\frac{{m_{n-1}}!}{(\alpha)_{m_{n-1}}}\sum_{l=0}^{m_0}\frac{(\alpha)_l}{l!}\frac{1}{m_{n-1}-l}\\
&+
\sum_{\begin{subarray}{c}
          0\le m_0=\cdots=m_{n-1}<\infty
         \end{subarray}}
\frac{1}{(m_0+\alpha)(m_0+\beta)^{k_n-1}}
\left\{ \prod_{i=1}^{n-1}\frac{1}{(m_i+\alpha)(m_i+\beta)^{k_i-1}} \right\}\\
&\hspace*{1.5cm}
\times
\left\{\frac{{m_{n-1}}!}{(\alpha)_{m_{n-1}}}\sum_{l=0}^{m_0-1}\frac{(\alpha)_l}{l!}\frac{1}{m_{n-1}-l}
+\frac{(\alpha)_{m_0}}{{m_0}!}\frac{{m_{n-1}}!}{(\alpha)_{m_{n-1}+1}}\right\}\\
=
&\sum_{\begin{subarray}{c}
          0\le l\le m_0{\le}m_1{\le} \cdots{\le}m_{n-1}<\infty\\
          l{\neq}m_{n-1}
         \end{subarray}}
\frac{(\alpha)_l}{l!}\frac{{m_{n-1}}!}{(\alpha)_{m_{n-1}}}
\frac{1}{(m_0+\alpha)(m_0+\beta)^{k_n-1}}\\
&\hspace*{1.5cm}
\times\left\{ \prod_{i=1}^{n-1}\frac{1}{(m_i+\alpha)(m_i+\beta)^{k_i-1}} \right\}\frac{1}{m_{n-1}-l}\\
&+
\sum_{\begin{subarray}{c}
          0\le m_0=\cdots=m_{n-1}<\infty
         \end{subarray}}
\frac{1}{(m_0+\alpha)(m_0+\beta)^{k_n-1}}
\left\{ \prod_{i=1}^{n-1}\frac{1}{(m_i+\alpha)(m_i+\beta)^{k_i-1}} \right\}\\
&\hspace*{1.5cm}
\times\frac{(\alpha)_{m_0}}{{m_0}!}\frac{{m_{n-1}}!}{(\alpha)_{m_{n-1}+1}}\\
=
&T^{\star}(k_n,k_1,\ldots,k_{n-1};(\alpha,\beta))
+Z(n+1|k_1+\cdots+k_n-n;(\alpha,\beta)).
\end{aligned}
\end{equation*}
Hence, substituting this into the right-hand side of (7), we obtain (6). 
The identity (5) can be proved in the same way as in \cite[Section 3]{ho} (see also \cite{o1} and \cite{o2}) with \cite[Lemma 2.2]{i}. For more details, refer to \cite[Proof of Lemma 2.5]{i}.
\end{proof}
\begin{proof}[Proof of Theorem {$1$}]
Applying Lemma 1 (i) and (ii) to the index $(k_{i+1},\ldots,k_n,k_1,\ldots,k_i)$ and 
taking the sum $\sum_{i=1}^{n}$ on both sides of the results, we obtain (3) and (4), respectively.
\end{proof}
Using Theorem 1 (ii) and Ohno's argument in \cite[p.~4]{o1}, \cite[pp.~138--139]{o2} (see also \cite[Corollary 2.4 and its proof]{ho}, \cite[p.~294]{ow}, 
\cite[pp.~514--516]{i}), we can prove the following relation:
\begin{corollary}[October 2012]
We have 
\begin{equation}
\begin{aligned}
&\sum_{\begin{subarray}{l}k_1+\cdots+k_n= k\\
       k_i\in\mathbb{Z}_{\ge1} (i=1,\ldots,n-1),\\
k_n\in\mathbb{Z}_{\ge2}
      \end{subarray}}
Z^{\star}_{I}(\{k_i\}_{i=1}^{n};(\alpha,\beta))\\
=
&\binom{k-2}{n-1}Z(n-1|k-n+1;(\alpha,\beta))+\binom{k-2}{n-2}Z(n|k-n;(\alpha,\beta))
\end{aligned}
\end{equation}
for all $k,n\in\mathbb{Z}$ such that $0<n<k$ and $\alpha,\beta\in\mathbb{C}$ such that $\mathrm{Re}(\alpha)>0$, $\beta\notin\mathbb{Z}_{\le0}$.
\end{corollary}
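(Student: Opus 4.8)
The plan is to derive the sum formula by summing the star cyclic sum formula, Theorem 1 (ii), over all indices of depth $n-1$ and weight $k-1$, and then to show that on the resulting left-hand side every admissible index of depth $n$ and weight $k$ is produced with the same multiplicity $n-1$; the identity then collapses to the assertion after one binomial simplification. First I would dispose of the base case $n=1$: there the left-hand side is the single term $Z^{\star}_{I}(k;(\alpha,\beta))=Z(0|k;(\alpha,\beta))$, the second binomial coefficient $\binom{k-2}{-1}$ vanishes, and the first term is $\binom{k-2}{0}Z(0|k;(\alpha,\beta))$, so the formula holds trivially. Hence I assume $n\ge2$.

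Next I would apply Theorem 1 (ii) to each composition $\mathbf{k}=(k_1,\dots,k_{n-1})$ of $k-1$ into $n-1$ positive parts. Because $0<n<k$ forces $k-1>n-1$, every such $\mathbf{k}$ has a part $\ge2$, so the hypothesis of Theorem 1 (ii) is met; there are $\binom{k-2}{n-2}$ of them. For depth $n-1$ and weight $k-1$ the right-hand side of Theorem 1 (ii) does not depend on $\mathbf{k}$ and equals $(k-n)Z(n-1|k-n+1;(\alpha,\beta))+(n-1)Z(n|k-n;(\alpha,\beta))$, so summing over all $\mathbf{k}$ just multiplies it by $\binom{k-2}{n-2}$.

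The core of the argument is the bookkeeping on the left. Every summand there is $Z^{\star}_{I}(j+1,k_{i+1},\dots,k_{i-1},k_i-j;(\alpha,\beta))$, whose first entry is $\ge1$, whose last entry $k_i-j$ is $\ge2$, and which has depth $n$ and weight $k$; thus it equals $Z^{\star}_{I}(\mathbf{l};(\alpha,\beta))$ for some admissible $\mathbf{l}=(l_1,\dots,l_n)$ of weight $k$. Conversely I would show each such $\mathbf{l}$ arises exactly $n-1$ times: matching entries forces $j=l_1-1$, $k_i=l_1+l_n-1$ and the cyclic block $(k_{i+1},\dots,k_{i-1})=(l_2,\dots,l_{n-1})$, and for each of the $n-1$ positions $i$ this reconstructs exactly one composition $\mathbf{k}$ (put $l_1+l_n-1$ in slot $i$ and fill the remaining slots cyclically with $l_2,\dots,l_{n-1}$), while the range condition $0\le j\le k_i-2$ is equivalent to $l_n\ge2$ and so holds. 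The point that renders the multiplicity uniform is that admissibility of $\mathbf{k}$ for Theorem 1 (ii) is the rotation-invariant condition ``some part $\ge2$'', which is automatic since $l_1+l_n-1\ge2$; hence all $n-1$ rotations are legitimate. Counting triples $(\mathbf{k},i,j)$ rather than compositions $\mathbf{k}$ (so that any rotational symmetry of $\mathbf{k}$ costs nothing), the accumulated left-hand side is $(n-1)\sum_{\mathbf{l}}Z^{\star}_{I}(\mathbf{l};(\alpha,\beta))$, the sum running over all admissible $\mathbf{l}$ of depth $n$ and weight $k$.

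Equating the two totals, dividing by $n-1$, and using the elementary identity $\binom{k-2}{n-2}\,\frac{k-n}{n-1}=\binom{k-2}{n-1}$ yields
\[
\sum_{\mathbf{l}}Z^{\star}_{I}(\mathbf{l};(\alpha,\beta))=\binom{k-2}{n-1}Z(n-1|k-n+1;(\alpha,\beta))+\binom{k-2}{n-2}Z(n|k-n;(\alpha,\beta)),
\]
which is the corollary. I expect the only delicate point to be the uniform multiplicity count in the third paragraph; everything hinges on the observation that the reconstructed $\mathbf{k}$ is automatically admissible for Theorem 1 (ii), and the cleanest way to avoid overcounting issues caused by repeated parts is to count the triples $(\mathbf{k},i,j)$ directly.
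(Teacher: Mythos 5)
Your proposal is correct and follows essentially the same route as the paper, which proves the corollary by invoking ``Ohno's argument'' from \cite{o1}, \cite{o2} (see also \cite[Corollary 2.4]{ho}): namely, summing the cyclic sum formula of Theorem 1 (ii) over all $\binom{k-2}{n-2}$ compositions of $k-1$ into $n-1$ positive parts and checking that each admissible index of depth $n$ and weight $k$ occurs exactly $n-1$ times on the left. Your multiplicity count via the triples $(\mathbf{k},i,j)$ and the final simplification $\binom{k-2}{n-2}\frac{k-n}{n-1}=\binom{k-2}{n-1}$ are both accurate, so this is just the cited argument written out in full.
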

Our relation (9) is a new sum formula for (2); compare (9) with 
the sum formulas of \cite[\textbf{(R2)} of Section 3]{i4}, \cite{i2007}, 
\cite[Remark 2.4]{i2009}. 
Taking $\alpha=\beta=1$ and $\alpha=\beta$ in (9), we have 
the sum formula for MZSVs (Granville \cite{gr}, Hoffman \cite{h}, Zagier) and 
for one-parameter multiple series proved in \cite[Corollary 2.8 (ii)]{i}, respectively.
\section{Applications and problems}
We give applications of Theorem 1 and also pose some problems. Hereafter we use the following notation: 
\begin{equation*}
\begin{aligned}
&\{k_1,\ldots,k_m\}^{n}:=\underbrace{k_1,\ldots,k_m,\ldots,k_1,\ldots,k_m}_{mn},\\
&\{a_{1i},\ldots,a_{mi}\}_{i=1}^{n}:=a_{11},\ldots,a_{m1},\ldots,a_{1n},\ldots,a_{mn}
\end{aligned}
\end{equation*}
($m,n\in\mathbb{Z}_{\ge1}$); we regard $\{k_1,\ldots,k_m\}^{0}$ and $\{a_{1i},\ldots,a_{mi}\}_{i=1}^{0}$ as the empty set $\emptyset$.
\par
$\textbf{(C1)}$ 
By partial differentiation, we derive relations among (1) and (2) from 
the relations (3) and (4). We use some identities for the Pochhammer symbol 
$(a)_{m}$ to 
calculate the partial derivatives on $\alpha$. They fairly simplify the calculation.  Using 
\begin{equation*}
\frac{(\alpha)_{m_1}}{(\alpha)_{m_n}}\prod_{i=2}^{n}\frac{1}{m_i+\alpha}
=
\prod_{i=2}^{n}\frac{(\alpha)_{m_{i-1}}}{(\alpha)_{m_i+1}},
\end{equation*}
we have 
\begin{equation}
\begin{aligned}
&\frac{(-1)^r}{r!}\frac{\mathrm{d}^r}{\mathrm{d}\alpha^r}
\left(\frac{(\alpha)_{m_1}}{(\alpha)_{m_n}}\prod_{i=2}^{n}\frac{1}{m_i+\alpha}
\right)\\
=&
\sum_{\begin{subarray}{c}
r_1+\cdots+r_{n-1}= r\\
       r_i\in\mathbb{Z}_{\ge0}
      \end{subarray}}
\prod_{i=2}^{n}\frac{(\alpha)_{m_{i-1}}}{(\alpha)_{m_i+1}}
\sum_{m_{i-1}\le{l_1}\le\cdots\le{l_{r_{i-1}}}\le{m_i}}\frac{1}{(l_1+\alpha)\cdots(l_{r_{i-1}}+\alpha)}\\
=&
\frac{(\alpha)_{m_1}}{(\alpha)_{m_n}}\left(\prod_{i=2}^{n}\frac{1}{m_i+\alpha}\right)\\
&\times
\sum_{\begin{subarray}{c}
r_1+\cdots+r_{n-1}= r\\
       r_i\in\mathbb{Z}_{\ge0}
      \end{subarray}}
\sum_{\begin{subarray}{c}
m_1{\le}m_{11}\le\cdots{\le}m_{1 r_1}{\le}m_2\\
\vdots\\
m_{n-1}{\le}m_{n-1 1}\le\cdots{\le}m_{n-1 r_{n-1}}{\le}m_n
\end{subarray}}
\prod_{i=1}^{n-1}\prod_{j=1}^{r_i}\frac{1}{m_{ij}+\alpha}
\end{aligned}
\end{equation}
for $n\in\mathbb{Z}_{\ge1}$, $r\in\mathbb{Z}_{\ge0}$ and $m_1,\ldots,m_n\in\mathbb{Z}$ such that $0\le{m_1}\le\cdots{\le}m_n$. 
These identities give the following expression: 
\begin{equation}
\begin{aligned}
\frac{(-1)^r}{r!}\frac{\mathrm{\partial}^r}{\mathrm{\partial}\alpha^r}
Z^{\star}_{I}(\{k_i\}_{i=1}^{n};(\alpha,\beta))
=
\sum_{\begin{subarray}{c}
r_1+\cdots+r_{n-1}= r\\
       r_i\in\mathbb{Z}_{\ge0}
      \end{subarray}}
Z^{\star}_{I}(\{k_i,\{1\}^{r_i}\}_{i=1}^{n-1},k_n;(\alpha,\beta))
\end{aligned}
\end{equation}
for $r\in\mathbb{Z}_{\ge0}$. It is obvious that the partial derivative 
$\frac{(-1)^r}{r!}\frac{\mathrm{\partial}^r}{\mathrm{\partial}\beta^r}Z^{\star}_{I}(\{k_i\}_{i=1}^{n};(\alpha,\beta))$ 
can also be expressed in a $\mathbb{Z}$-linear combination of $Z^{\star}_{I}(\{k_i\}_{i=1}^{n};(\alpha,\beta))$. 
Therefore differentiating both sides of (4) gives further relations among (2). 
In addition, the above examination shows that the partial differentiation on 
$\alpha$ and on $\beta$ give different relations. For instance, we can obtain the following: For the index $(\{\{1\}^{m-1},2\}^{n})$ ($m,n\in\mathbb{Z}_{\ge1}$), 
the relation (4) becomes  
\begin{equation}
\begin{aligned}
Z^{\star}_{I}(1,\{\{1\}^{m-1},2\}^{n};(\alpha,\beta))
=
Z(mn|n+1;(\alpha,\beta))+mZ(mn+1|n;(\alpha,\beta))
\end{aligned}
\end{equation}
($m,n\in\mathbb{Z}_{\ge1}$, $\alpha,\beta\in\mathbb{C}$ such that 
$\mathrm{Re}(\alpha)>0$, $\beta\notin\mathbb{Z}_{\le0}$). 
Differentiating both sides of (12) $r$ times on $\alpha$ and using (11), 
we have the relation 
\begin{equation*}
\begin{aligned}
&\sum_{\begin{subarray}{c}
r_1+\cdots+r_n= r\\
       r_i\in\mathbb{Z}_{\ge0}
      \end{subarray}}
Z^{\star}_{I}(1,\{\{1\}^{m+r_i-1},2\}_{i=1}^{n};(\alpha,\beta))\\
=
&\binom{mn+r-1}{r}Z(mn+r|n+1;(\alpha,\beta))
+m\binom{mn+r}{r}Z(mn+r+1|n;(\alpha,\beta))
\end{aligned}
\end{equation*}
for $r\in\mathbb{Z}_{\ge0}$. 
On the other hand, differentiating both sides of (12) $r$ times on $\beta$, 
we have the relation 
\begin{equation*}
\begin{aligned}
&\sum_{\begin{subarray}{c}
r_1+\cdots+r_{n+1}= r\\
       r_i\in\mathbb{Z}_{\ge0}
      \end{subarray}}
Z^{\star}_{I}(1+r_1,\{\{1\}^{m-1},2+r_i\}_{i=2}^{n+1};(\alpha,\beta))\\
=
&\binom{n+r}{r}Z(mn|n+r+1;(\alpha,\beta))
+m\binom{n+r-1}{r}Z(mn+1|n+r;(\alpha,\beta))
\end{aligned}
\end{equation*}
for $r\in\mathbb{Z}_{\ge0}$. 
We remark that the relation (12) is a generalization of a relation for MZSVs in Ohno and Wakabayashi \cite[Examples (b)]{ow}, Zlobin \cite[Theorem 5]{zl2}. 
It is worth noting that the case $m=1$ of (12) can be derived from the hypergeometric identity of 
Krattenthaler and Rivoal \cite[Proposition 1 (ii)]{kr}, which is a limiting case 
of Andrews's basic hypergeometric identity \cite[Theorem 4]{an}: 
indeed, it can be obtained by taking $a=\alpha+\beta$, $c_0=\alpha$, $b_i=\alpha$, $c_i=\beta$ ($i=1,\ldots,s-1$), 
$b_s=\beta$, $c_s=1$ ($s\in\mathbb{Z}_{\ge2}$, $\alpha,\beta\in\mathbb{C}$ such that $\mathrm{Re}(\alpha)>0$, 
$\beta\notin\mathbb{Z}_{\le0}$) in \cite[Proposition 1 (ii)]{kr}. 
The same way as above can be applied to deriving relations among (1) and (2) 
from (3). 
Using 
\begin{equation*}
\frac{(\alpha)_{m_1}}{(\alpha)_{m_n}}
=
\prod_{i=2}^{n}\frac{(\alpha)_{m_{i-1}}}{(\alpha)_{m_i}}
\end{equation*}
and the same calculus as in the proof of (10), we have 
\begin{equation*}
\begin{aligned}
&\frac{(-1)^r}{r!}\frac{\mathrm{d}^r}{\mathrm{d}\alpha^r}
\left(\frac{(\alpha)_{m_1}}{(\alpha)_{m_n}}\right)\\
=&
\frac{(\alpha)_{m_1}}{(\alpha)_{m_n}}
\sum_{\begin{subarray}{c}
r_1+\cdots+r_{n-1}= r\\
       r_i\in\mathbb{Z}_{\ge0}
      \end{subarray}}
\sum_{\begin{subarray}{c}
m_1{\le}m_{11}\le\cdots{\le}m_{1 r_1}<m_2\\
\vdots\\
m_{n-1}{\le}m_{n-1 1}\le\cdots{\le}m_{n-1 r_{n-1}}<m_n
\end{subarray}}
\prod_{i=1}^{n-1}\prod_{j=1}^{r_i}\frac{1}{m_{ij}+\alpha}
\end{aligned}
\end{equation*}
for $n\in\mathbb{Z}_{\ge1}$, $r\in\mathbb{Z}_{\ge0}$ and $m_1,\ldots,m_n\in\mathbb{Z}$ such that $0\le{m_1}<\cdots<m_n$. 
This identity gives the following expression of the partial derivatives on $\alpha$ of (1): 
\begin{equation}
\begin{aligned}
&\frac{(-1)^r}{r!}\frac{\mathrm{\partial}^r}{\mathrm{\partial}\alpha^r}
\left(\sum_{0\le m_1< \cdots<m_{n}<\infty}
\frac{(\alpha)_{m_1}}{{m_1}!}
\frac{{m_n}!}{(\alpha)_{m_n}}
\left\{\prod_{i=1}^{n}\frac{1}{(m_i+\alpha)^{a_i}(m_i+\beta)^{b_i}}\right\}\right)\\
=
&\sum_{\begin{subarray}{c}
r_1+\cdots+r_{n-1}\\
+s_1+\cdots+s_n=r \\
r_i,s_i\in\mathbb{Z}_{\ge0}
      \end{subarray}}
\left\{ \prod_{i=1}^{n}\binom{a_i-1+s_i}{s_i}\right\}
Z_{(\{r_i\}_{i=1}^{n-1})}(\{a_i+s_i\}_{i=1}^{n}|
\{b_i\}_{i=1}^{n};(\alpha,\beta))
\end{aligned}
\end{equation}
for $r\in\mathbb{Z}_{\ge0}$, where 
\begin{equation}
\begin{aligned}
&Z_{(\{r_i\}_{i=1}^{n-1})}(\{a_i\}_{i=1}^{n}|\{b_i\}_{i=1}^{n};(\alpha,\beta))\\
:=&
\sum_{\begin{subarray}{c}
0=m_0{\le}m_1{\le}m_{11}\le\cdots{\le}m_{1 r_1}<m_2\\
\vdots\\
m_{n-1}{\le}m_{n-1 1}\le\cdots{\le}m_{n-1 r_{n-1}}<m_n<\infty
\end{subarray}}
\frac{(\alpha)_{m_1}}{{m_1}!}\frac{{m_n}!}{(\alpha)_{m_n}}\\
&\times\left\{\prod_{i=1}^{n-1}\frac{1}{(m_i+\alpha)^{a_i}(m_i+\beta)^{b_i}}
\left(\prod_{j=1}^{r_i}\frac{1}{m_{ij}+\alpha}\right)\right\}
\frac{1}{(m_n+\alpha)^{a_n}(m_n+\beta)^{b_n}},
\end{aligned}
\end{equation}
where $n\in\mathbb{Z}_{\ge1}$; $r_i\in\mathbb{Z}_{\ge0}$ ($i=1,\ldots,n-1$); 
the conditions for $a_i$, $b_i$, $\alpha$, $\beta$ are the same as those in (1). 
For $n=1$, this multiple series becomes
\begin{equation*}
Z_{(\emptyset)}(a_1|b_1;(\alpha,\beta))=\sum_{\begin{subarray}{c}0{\le}m_1<\infty\end{subarray}}
\frac{1}{(m_1+\alpha)^{a_1}(m_1+\beta)^{b_1}},
\end{equation*}
where $\emptyset$ is the empty set. (A characteristic of (14) is that 
its summation consists of both $<$ and $\le$. The usefulness of multiple series with such a summation was shown 
by Fischler and Rivoal \cite{fr}, Kawashima \cite{kaw}, Ulanskii \cite{ul}.) 
It is easy to verify that the multiple series (14) can be expressed 
in a $\mathbb{Z}$-linear combination of (1) or (2), and that 
the partial derivative on $\beta$ of (1) can be expressed in a 
$\mathbb{Z}$-linear combination of (1). Therefore differentiating both sides of (3) also gives relations among (1) and (2). For instance, we can obtain the following: 
Taking $k_i=2$ ($i=1,\ldots,n$) in (3), we have 
\begin{equation}
\begin{aligned}
Z_{I}(1,\{2\}^{n};(\alpha,\beta))
=
Z_{II}(\{2\}^{n-1},3;(\alpha,\beta))
\end{aligned}
\end{equation}
for $n\in\mathbb{Z}_{\ge1}$, $\alpha,\beta\in\mathbb{C}$ such that 
$\mathrm{Re}(\alpha)>0$, $\beta\notin\mathbb{Z}_{\le0}$. 
Differentiating both sides of (15) $r$ times on $\alpha$ and using (13), we have the relation 
\begin{equation}
\begin{aligned}
&\sum_{\begin{subarray}{c}
r_1+\cdots+r_{n}\\
+s_1+\cdots+s_n=r \\
r_i,s_i\in\mathbb{Z}_{\ge0}
      \end{subarray}}
Z_{(\{r_i\}_{i=1}^{n})}(0,\{1+s_i\}_{i=1}^{n}|\{1\}^{n+1};(\alpha,\beta))\\
=
&
\sum_{\begin{subarray}{c}
r_1+\cdots+r_{n-1}\\
+s_1+\cdots+s_n=r \\
r_i,s_i\in\mathbb{Z}_{\ge0}
      \end{subarray}}
(1+s_n)Z_{(\{r_i\}_{i=1}^{n-1})}(\{1+s_i\}_{i=1}^{n-1},2+s_n|\{1\}^{n};(\alpha,\beta))
\end{aligned}
\end{equation}
for $r\in\mathbb{Z}_{\ge0}$. On the other hand, differentiating both sides of (15) $r$ times on $\beta$, 
we have the relation 
\begin{equation}
\begin{aligned}
&\sum_{\begin{subarray}{c}
r_1+\cdots+r_{n+1}=r \\
r_i\in\mathbb{Z}_{\ge0}
      \end{subarray}}
Z_{I}(1+r_1,\{2+r_i\}_{i=2}^{n+1};(\alpha,\beta))\\
=
&\sum_{\begin{subarray}{c}
r_1+\cdots+r_{n}=r \\
r_i\in\mathbb{Z}_{\ge0}
      \end{subarray}}
Z_{II}(\{2+r_i\}_{i=1}^{n-1},3+r_n;(\alpha,\beta))
\end{aligned}
\end{equation}
for $r\in\mathbb{Z}_{\ge0}$. 
As pointed out in a preprint of the author distributed on February 12, 2015, 
the relation (15) is a generalization of the duality formula for $\zeta(1,\{2\}^{n})$. 
In fact, it is a new and non-trivial two-parameter generalization; compare (15) with \cite[Lemma 2.3]{i2009}. 
\begin{problems}[December 2018 and 2022] 
The following problems still defy a solution. I hope that 
investigating them exposes the reader to new viewpoints. 
The problems (\textit{i}), (\textit{ii}) and (\textit{iii}) below are on 
appropriate generalizations and the problem (\textit{iv}) on appropriate methods. 
\par 
(\textit{i}) It is interesting to generalize my duality formula (15) to a full duality formula 
 in appropriate ways and also to more general duality relations among (1). 
(The research on the duality of (1) has been part of my research 
project since 2007.) As a note on the latter part of this problem, 
I point out that my relation (17) is a two-parameter generalization of Ohno's relation for MZVs 
\cite[Theorem 1]{o1999} for the index $(1,\{2\}^{n})$ ($n\in\mathbb{Z}_{\ge1}$). 
Ohno's relation is a generalization of the duality formula for MZVs. 
(My relations (15), (16) and (17) were written in my preprint distributed on February 12, 2015.) 
\par (\textit{ii}) In \cite[Remark 7 (i)]{i4}, I gave a new proof of Hoffman's identity 
$\zeta(\{1\}^k, l+2)=\zeta(\{1\}^l, k+2)$ ($k,l\in\mathbb{Z}_{\ge0}$; 
\cite[Theorem 4.4]{h}), which is the duality formula for $\zeta(\{1\}^k,l+2)$. 
My proof is based on the hypergeometric identities \cite[Theorem 4]{an} and \cite[Proposition 1 (i)]{kr}; 
therefore it can be regarded as a hypergeometric proof of the duality formula. 
It is interesting to generalize my proof to a proof of the duality formula 
for all MZVs in appropriate ways. 
\par 
(\textit{iii}) It is interesting to generalize my results in \cite{i2011}, \cite{i4}, \cite{i2020} 
and to extend my ideas in there in appropriate ways, e.g., the identities (29), 
(30), (28), (58), (27), (57), (66), (67), (70), (71) of \cite{i4} and the ideas for their proofs, namely, 
my identities for 
\begin{equation*}
\begin{aligned}
&\zeta^{\star}(\{1\}^{k+1}, \{2\}^{l+1}),\,\, \zeta^{\star}(k+2,\{2\}^l), \,\,
\zeta(\{1\}^k, l+2), \,\,\zeta^{\star}(\{1\}^k, l+2),\\ 
&\zeta_{k+1,+}^{<}(\{1\}^k, l+2;\alpha), \,\,\zeta_{k+1,+}^{\le}(\{1\}^k, l+2;\alpha), 
\,\,\zeta_{k+1,+}^{<}(\{1\}^{k+1}|\{0\}^k, l+1;(\alpha,\beta)),\\
&\zeta_{k+1,+}^{\le}(\{1\}^{k+1}|\{0\}^k, l+1;(\alpha,\beta)) 
\quad (k,l\in\mathbb{Z}_{\ge0}; 
\alpha,\beta\in\mathbb{C}\setminus\mathbb{Z}_{\le0}) 
\end{aligned}
\end{equation*}
and my ideas for their proofs. (I proved the identities (28), (58), (27), (57) 
 in 2014 and the identities (66), (67), (70), (71) in January 2015. For (29) and (30), see \cite{i2011}.) 
\par 
(\textit{iv}) In \cite{i4}, I studied four-parameter multiple series, (1) and (41) of \cite{i4}. (The multiple series (1) of \cite{i4} is a four-parameter extension of 
both (1) and (2) of the present paper.) I studied relations among them 
in there (see also \cite{dual-3}). It is interesting to study analytic continuations of my four-parameter multiple series (1) and (41) with complex variables $a_i$, $b_i$, $c_i$, $d_i$, $s_i$, $t_i$ 
in appropriate ways. (This requires appropriate methods of handling 
the Pochhammer factors of (1) and (41).) 
This problem grew out of a reading of 
the papers Akiyama, Egami and Tanigawa \cite{aet} and 
Goncharov \cite[pp.~25--32]{gon}, where the authors proved analytic continuations of 
a multiple zeta-function and results on its values at non-positive integers. 
The problem grafts a problem on my research objects and on the above authors's. 
\end{problems}
\begin{note}[2022] 
(\textit{i}) I note that a revised manuscript of \cite{i} which contains Remark 2.7 was first submitted to the journal on August 4, 2009. 
This is an additional explanation for the addition of Remark 2.7 to \cite{i}. 
See also an explanation at the beginning of \cite[Remark 2.7]{i} and 
my preprint arXiv:0908.2536v1. 
\par 
(\textit{ii}) In \cite[Remark 2.7]{i}, I noted a connection between the hypergeometric identity 
\cite[Proposition 1 (ii)]{kr} and a multiple zeta-star object (the case $\alpha=\beta$ of my multiple series (2)). The connection was noted also in 
my preprint arXiv:0908.2536v1, posted on August 18, 2009. 
It appears that, after my note, some researchers took an interest in 
the hypergeometric identities \cite[Theorem 4]{an}, \cite[Proposition 1]{kr} 
and their proofs. In 2009, based on the connection, I began an application of 
the above hypergeometric identities to the study of relations for MZVs, MZSVs 
and special values of multiple polylogarithms. 
After that, I obtained many results; see \cite{i2011}, \cite{i4}, \cite{i2020}. 
The identities (29), (30) and (28), (58) of \cite{i4}, among others, have been known to be results of mine since 2011 and 2014, respectively (see also \cite{i2011}). 
(The first two identities show the rightness of my observations stated in \cite[(\textbf{A3}) and (\textbf{A4})]{i2011}; see \cite[Addendum]{i2011}.) 
I obtained most of the results of \cite{i4} and Applications 1 and 2 of \cite{i2020} in 2013--2015 (see \cite[Notes 1 and 2]{i2020}). I note that most of the contents of \cite{i4} and Application 1 of \cite{i2020} were written in preprints of mine distributed in 2013 and 2015. 
These preprints were sent to a Japanese zeta-researcher (a JZ for short). 
To be more precise, an earlier version of \cite{i4} and its revised version 
were sent by e-mail to the JZ in October and December 2013, respectively, 
and further, a preprint of \cite{i4} was also sent by e-mail 
to the JZ in February 2015. (Application 1 of \cite{i2020} was written in 
all these preprints.) These pieces of information about my research 
activities can be confirmed by asking the JZ. 
In the preprint sent in February 2015, I wrote the identities (28) and (58) of \cite{i4}, which are new identities for $\zeta(\{1\}^k, l+2)$ 
and $\zeta^{\star}(\{1\}^k, l+2)$. It was already known in 2014 that 
I discovered these new identities, 
because information on my discovery was $\ast\ast\ast$ by $\ast\ast\ast$ 
to $\ast\ast\ast$ in 2014. It appears that my identities are revelations about 
MZVs and MZSVs. Hence it is interesting to generalize my identities 
in appropriate ways. 
\end{note} 
\par
$\textbf{(C2)}$ 
For $k_1,\ldots,k_n\in\mathbb{Z}_{\ge1}$ such that $k_1+\cdots+k_n=m+n$ 
($m,n\in\mathbb{Z}_{\ge1}$), the right-hand side of (4) becomes
\begin{equation*}
mZ(n|m+1;(\alpha,\beta))+nZ(n+1|m;(\alpha,\beta)). 
\end{equation*} 
This sum is invariant under the replacement 
$(\alpha, \beta, m, n){\leftrightarrow}(\beta, \alpha, n, m)$; 
therefore the left-hand side of (4) also has the following symmetry: 
\begin{equation*}
\begin{aligned}
&\sum_{i=1}^{n}\sum_{j=0}^{k_i-2}Z^{\star}_{I}(j+1,k_{i+1},\ldots,k_n,k_1,\ldots,k_{i-1},k_i-j;(\alpha,\beta))\\
=
&\sum_{i=1}^{m}\sum_{j=0}^{l_i-2}Z^{\star}_{I}(j+1,l_{i+1},\ldots,l_m,l_1,\ldots,l_{i-1},l_i-j;(\beta,\alpha))
\end{aligned}
\end{equation*}
for any fixed $m,n\in\mathbb{Z}_{\ge1}$, all $k_1,\ldots,k_n, l_1,\ldots,l_m\in\mathbb{Z}_{\ge1}$ such that 
$k_1+\cdots+k_n=l_1+\cdots+l_m=m+n$ and $\alpha,\beta\in\mathbb{C}$ with 
$\mathrm{Re}(\alpha),\mathrm{Re}(\beta)>0$. 
An application of this kind of symmetry of parametrized multiple series 
was studied also in \cite{i2009}.
\par
$\textbf{(C3)}$ It is interesting to investigate whether CSFs for MZVs and MZSVs can be generalized to three- or more parameter multiple series. 
In fact, the results will have interesting applications to the study of relations among MZVs and MZSVs. For example, one will be able to derive relations among them from the results by using our method used in $\textbf{(C1)}$ and \cite{i4}. (Our method was developed in preprints of \cite{i4} distributed in 2013.) 
We shall pose a problem on this topic. 
We mainly consider this topic for our three-parameter multiple series 
\begin{equation}
\begin{aligned}
&\sum_{0{\le}m_1{\le}\cdots{\le}m_n<\infty}
\frac{(\alpha)_{m_1}}{{m_1}!}\frac{(\beta)_{m_1}}{(\gamma)_{m_1}}
\frac{{m_n}!}{(\alpha)_{m_n}}\frac{(\gamma)_{m_{n}}}{(\beta)_{m_n}}\\
&\times
\left\{\prod_{i=1}^{n}\frac{1}{(m_i+\alpha)^{a_i}(m_i+\beta)^{b_i}(m_i+\gamma)^{c_i}}\right\},
\end{aligned}
\end{equation}
where $n\in\mathbb{Z}_{\ge1}$; $a_{i},b_{i},c_i\in\mathbb{Z}$ ($i=1,\ldots,n$) such that $a_i+b_i+c_i\ge1$ ($i=1,\ldots,n-1$), 
$a_{n}+b_{n}+c_{n}\ge2$; $\alpha,\beta,\gamma\in\mathbb{C}\setminus\mathbb{Z}_{\le0}$ such that $\mathrm{Re}(\alpha+\beta-\gamma)>0$: these conditions guarantee the absolute convergence of (18) 
(see \cite[(3.12)]{kr}). The multiple series (18) is an extension of (2). 
The multiple series (18) was discovered by the author in 2014 (before September 17, 2014) by studying the hypergeometric identities of Andrews \cite[Theorem 4]{an}, Krattenthaler and Rivoal \cite[Proposition 1]{kr} (see \cite[Note 2 (iv)]{i2020} and \cite[Introduction]{i4}). Our previous remark \cite[Remark 2.7]{i} played an essential role for discovering (18) and its identities. Here we put 
\begin{equation}
\begin{aligned}
&Z^{\star}_{I}(\{k_i\}^{n}_{i=1};(\alpha, \beta,\gamma))\\
:=
&\sum_{0\le m_1{\le} \cdots {\le}m_n<\infty}\frac{(\alpha)_{m_1}}{{m_1}!}\frac{(\beta)_{m_1}}{(\gamma)_{m_1}}
\frac{{m_n}!}{(\alpha)_{m_n}}\frac{(\gamma)_{m_{n}}}{(\beta)_{m_n}}\\
&\times\frac{1}{(m_1+\gamma)^{k_1}}
\left\{\prod_{i=2}^{n}\frac{1}{(m_i+\alpha)(m_i+\beta)(m_i+\gamma)^{k_i-2}}\right\}
\end{aligned}
\end{equation}
and 
\begin{equation*}
Z(a|b|c;(\alpha^{'},\beta^{'},\gamma^{'})):=\sum_{m=0}^{\infty}
\frac{1}{(m+\alpha^{'})^{a}(m+\beta^{'})^{b}(m+\gamma^{'})^{c}},
\end{equation*}
where $k_i\in\mathbb{Z}_{\ge1}$ ($i=1,\ldots,n-1$), $k_n\in\mathbb{Z}_{\ge2}$; 
$a,b,c\in\mathbb{Z}$ such that $a+b+c\ge2$; $\alpha,\beta,\gamma\in\mathbb{C}\setminus\mathbb{Z}_{\le0}$ such that $\mathrm{Re}(\alpha+\beta-\gamma)>0$; $\alpha^{'},\beta^{'},\gamma^{'}\in\mathbb{C}\setminus\mathbb{Z}_{\le0}$. The above single series is the case $n=1$ of (18). The multiple series (19) is an extension of both the multiple series $Z^{\star}(\{k_i\}^{n}_{i=1};\alpha)$ of \cite{i} and $Z^{\star}_{I}(\{k_i\}^{n}_{i=1};(\alpha,\beta))$, which are the cases $\alpha=\beta=\gamma$ and $\beta=\gamma$ of (19), respectively. CSF for MZSVs can perhaps be regarded as a generalization of the relation
\begin{equation}
\zeta^{\star}(1,\{2\}^{s-1})=2\zeta(2s-1)
\end{equation}
for $s\in\mathbb{Z}_{\ge2}$ (see Ohno and Zudilin \cite[p.~326]{oz}, 
Ohno and Wakabayashi \cite{ow}). Indeed, for the index $(1,\{2\}^{s-1})$, 
it becomes (20). 
We observed that the same situation occurs also for 
the one-parameter multiple series $Z^{\star}(\{k_i\}^{n}_{i=1};\alpha)$ of \cite{i} and the two-parameter multiple series (2), that is, each of these multiple series satisfies the same relation as (20); see \cite[Remark 2.7]{i} and 
a note on the case $m=1$ of (12) stated in \textbf{(C1)}. Furthermore, each relation can be generalized to CSF for the multiple series; see \cite[Theorem 1.1 (ii)]{i} and Theorem 1. 
We further studied whether this situation occurs also for more multi-parameter multiple series. As a result, we found that the three-parameter multiple series (19) also satisfies the same relation as (20), 
\begin{equation}
Z_{I}^{\star}(1,\{2\}^{s-1};(\alpha, \beta,\gamma))
=Z(s-1|s-1|1;(\alpha,\beta,\gamma))+Z(s-1|s-1|1;(\alpha,\beta,\alpha+\beta-\gamma))
\end{equation}
for $s\in\mathbb{Z}_{\ge2}$, $\alpha,\beta,\gamma\in\mathbb{C}\setminus\mathbb{Z}_{\le0}$ such that $\mathrm{Re}(\alpha+\beta-\gamma)>0$. 
(For details, see a note at the end of the present part.) 
This relation gives a generalization of the following three relations: (20), the case $m=1$ of (12), and the relation in \cite[Remark 2.7]{i}, which are the cases 
$\alpha=\beta=\gamma=1$, $\beta=\gamma$, and $\alpha=\beta=\gamma$ of (21), respectively. 
On the basis of the above facts, we pose the problem whether the relation (21) can be generalized to CSF for the three-parameter multiple series (18). (This problem is what the author wrote in research documents of February 2016.) We note that 
the relation (21) has an interesting application to the study of relations among MZVs and MZSVs. Indeed, we showed in \cite[$\textbf{(R2)}$ of Section 3]{i4} that the sum formulas for MZVs and MZSVs can be derived from (21) by partial differentiation. This application gives a new proof of the sum formulas, which is hypergeometric. From these facts, we think that the results of the above problem also have this kind of interesting application. 
Finally, we give some notes on (18) and (21). 
Not only (18) but also the four-parameter multiple series of 
\cite{i4} was discovered by the author in 2014 (before September 17, 2014) by studying the hypergeometric identities of Andrews \cite[Theorem 4]{an}, Krattenthaler and Rivoal \cite[Proposition 1]{kr} (see \cite[Note 2 (iv)]{i2020} and \cite[Introduction]{i4}). The results of \cite{i4} already show the potentiality of these multiple series (see also \cite[Note 2]{i2020}). Our previous remark \cite[Remark 2.7]{i} played an essential role for discovering these multiple series and their identities. 
The multiple series (18) converges uniformly in any compact 
subset of the domain $\{(\alpha,\beta,\gamma)\in\mathbb{C}^3$ : $\mathrm{Re}(\alpha),\mathrm{Re}(\beta)>0$, $\mathrm{Re}(\alpha+\beta)>\mathrm{Re}(\gamma)>0\}$; therefore it is holomorphic in this domain as a function of $(\alpha, \beta, \gamma)$. 
The relation (21) can be derived from the hypergeometric identity 
of \cite[Proposition 1 (ii)]{kr} by taking $a=\alpha+\beta$, $c_0=\alpha+\beta-\gamma$, $b_i=\alpha$, $c_i=\beta$ ($i=1,\ldots,s-1$), 
$b_s=1$, $c_s=\gamma$ ($s\in\mathbb{Z}_{\ge2}$, $\alpha,\beta,\gamma\in\mathbb{C}\setminus\mathbb{Z}_{\le0}$ such that 
$\mathrm{Re}(\alpha+\beta-\gamma)>0$). 
For another generalization of (20) and a related problem, 
see \cite[Corollary 2.3 (i) and $\textbf{(R1)}$ of Section 3]{i4}. 
\begin{note}[2022] 
I give some notes related to my papers \cite{i2007} and \cite{i2009}. 
The study of the multiple series (1) was originated in my master's thesis \cite{i2007}. I obtained the results of \cite{i2007} in 2006. 
One of the main results is the sum formula for the case $\alpha=\beta$ of my multiple series (1). 
In 2006, I talked on the sum formula and its proof at my then academic adviser's seminar. On February 3, 2007, my master's thesis passed the final examination. 
From some facts, I concluded in February 2007 that a certain person 
had already been aware of the contents of my master's thesis \cite{i2007} at 
the workshop ``Zeta Wakate Kenky\={u}sh\={u}kai" held at Graduate School of Mathematics, Nagoya University, Japan, February 17--18, 2007. 
(I am writing about this matter in \cite{add1}.) 
After my master's thesis, I generalized my sum formula 
mentioned above to a large class of relations; see \cite[Theorem 1.1]{i2009}. 
I talked on my research \cite{i2009} and related notes 
(see \cite[p.~29]{dual-2}) 
at Seminar on Analytic Number Theory, Graduate School of Mathematics, Nagoya University, Japan, 
February 13, 2008. See also \cite[Acknowledgments on p.~578]{i2009}. 
(The following people were parts of the audience of my talk: 
Kohji Matsumoto, Yoshio Tanigawa, Takashi Nakamura, Yoshitaka Sasaki.) 
The title of my talk was ``On Ohno's relation for certain multiple series". 
This title was announced both in Japanese and English on the website of 
Graduate School of Mathematics, Nagoya University, Japan about one week before. In addition, the title was also announced on February 5, 2008 
by using a mailing list of the Seminar. 
My multiple series (1) and (2) and their extensions studied in \cite{i4}, \cite{dual-2} 
are highly non-trivial objects. I am writing an expository paper on my previous 
papers on these multiple series, which is \cite{add3}. 
\end{note}
\begin{corrections}
(i) Page 514, lines 11--13 from the bottom: Change ``$0<\mathrm{Re}\,\alpha<s-1$" 
into ``$\mathrm{Re}\,\alpha>0$" and remove the sentence 
``The condition $0<\mathrm{Re}\,\alpha<s-1$ can be \ldots as functions of $\alpha$.". (ii) Page 517, line 12 from the bottom: ``March 2007" should be 
``February 3, 2007". 
\end{corrections}
\begin{flushleft}
Nagoya, Japan\\
\textit{E-mail address}: masahiro.igarashi2018@gmail.com
\end{flushleft}
\end{document}